\newtheorem{Thm}{Theorem}
\newtheorem{Pro}[Thm]{Proposition}
\def\blfootnote{\xdef\@thefnmark{}\@footnotetext}
\theoremstyle{definition}
\theoremstyle{remark}
\newtheorem{Rem}[Thm]{\bf{Remark}}
\newcommand{\ConvFDD}{\overset{f.d.d.}{\longrightarrow}}
\newcommand{\EqFDD}{\overset{f.d.d.}{=}}
\newcommand{\Cov}{\mathrm{Cov}}
\newcommand{\Var}{\mathrm{Var}}
\newcommand{\E}{\mathbb{E}}
\title{Short-range dependent processes subordinated to the Gaussian may not be strong mixing}
\author{Shuyang Bai\qquad Murad S. Taqqu}
\begin{document}
\maketitle
\begin{abstract}
There are all kinds of weak dependence. For example, strong mixing. Short-range dependence (SRD) is also a form of weak dependence. It occurs in the context of processes that are subordinated to the Gaussian. Is a SRD process strong mixing if the underlying Gaussian process is long-range dependent? We show that this is not necessarily the case. 
\end{abstract}
\blfootnote{
\begin{flushleft}
\textbf{Key words}~long-range dependence, short-range dependence, Hermite rank, strong mixing.
\end{flushleft}
\textbf{2010 AMS Classification: 60G18} \\
}

\section*{}
Let $\{Z_i\}$ be a standardized Gaussian process with covariance function $\gamma(n)=n^{2H-2}L(n)$, where $1/2<H<1$ and $L(n)$ is slowly varying.  We will consider instantaneous transformations $X_i=P(Z_i)$, where $\E P(Z_i)^2<\infty$. The sequence $\{X_i\}$ is said to be LRD if the sum of its covariances diverges and SRD if the sum converges. Note that the sequence $\{Z_i\}$ is LRD because $\sum_{n=-\infty}^{+\infty}\gamma(n)=\infty$. The sequence $\{X_i\}$, however, may be LRD or SRD depending on $P(x)$.

Suppose now  that $P(\cdot)$ is a finite-order polynomial. It can then be expressed as 
\[
P(x)=c_0+\sum_{k=m}^n  c_k H_k(x),    \quad 1\le m\le n,  
\]
with $c_m\neq 0$, where $H_k(x)$ is the $k$-th order Hermite polynomial. The bottom index $m$ is called the \emph{Hermite rank} of $P(x)$ and/or of the process $\{P(X_i)\}$.

It is known from \citet{breuer:major:1983:central} that when 
\begin{equation}\label{eq:SRD rank}
(2H-2)m+1<0,
\end{equation}
which can only happen when $m\ge 2$, 
then $\{X_i\}$ is SRD and as $N\rightarrow\infty$, 
\[
N^{-1/2} \sum_{i=1}^{[Nt]} \left[ P(Z_i)-\E P(Z_i) \right] \ConvFDD \sigma B(t), 
\]
where $\sigma^2=\sum_n \gamma(n)$, $B(t)$ is the standard Brownian motion and $\ConvFDD$ denotes convergence of finite-dimensional distributions. This seems to suggest that $\{P(Z_i)\}$  has weak dependence. It is natural to ask whether $\{P(Z_i)\}$ is strong mixing.\footnote{
 A stationary process $\{X_i\}$ is said to be strong mixing if 
\[\lim_{k\rightarrow\infty}\sup\left\{|P(A)P(B)-P(A\cap B)|, ~A\in \mathcal{F}_{-\infty}^0,B\in \mathcal{F}_{k}^{\infty} \right\}=0,
\] 
where $\mathcal{F}_a^b$ is the $\sigma$-field generated by $X_a,\ldots, X_b$.}
We will show that this may \emph{not} be the case.
\begin{Thm}\label{Thm:contradict}
Suppose that $\{Z_i\}$ is LRD with covariance  $\gamma(n)=n^{2H-2}L(n)$, where $H$ satisfies (\ref{eq:SRD rank}). 
The SRD process $\{X_i=P(Z_i)\}$ is not strong mixing if there exists a polynomial $Q(x)$ such that the Hermite rank $m'$  of $Q(P(x))$  satisfies 
\begin{equation}\label{eq:change rank}
(2H-2)m'+1>0.
\end{equation}
\end{Thm}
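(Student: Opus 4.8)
The plan is to argue by contradiction, transferring strong mixing from $\{X_i\}$ to the derived process $\{Q(X_i)\}=\{Q(P(Z_i))\}$ and then showing that the latter cannot be strong mixing because, by (\ref{eq:change rank}), its normalized partial sums converge to a self-similar process of Hurst index $H'=1-m'(1-H)\in(1/2,1)$ whose increments are strictly positively correlated, which is incompatible with the asymptotic independence of separated blocks that strong mixing would impose. Note that $\{Q(X_i)\}$ is long-range dependent even though $\{X_i\}$ is short-range dependent; this is the mechanism behind the failure of mixing.

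Concretely, suppose $\{X_i\}$ is strong mixing, with mixing coefficients $\alpha(n)\to0$. Since each $Q(X_i)$ is a function of $X_i$ alone, the $\sigma$-fields generated by $\{Q(X_i)\}$ are contained in those of $\{X_i\}$, so $\{Q(X_i)\}$ is strong mixing with coefficients no larger than $\alpha(n)$. Put $Y_i=Q(P(Z_i))-\E Q(P(Z_i))$ and write its Hermite expansion $Y_i=\sum_{k\ge m'}\beta_k H_k(Z_i)$ with $\beta_{m'}\neq0$. Then $\Cov(Y_0,Y_n)=\sum_{k\ge m'}\beta_k^2\,k!\,\gamma(n)^k\sim \beta_{m'}^2\,m'!\,\gamma(n)^{m'}=\beta_{m'}^2\,m'!\,n^{(2H-2)m'}L(n)^{m'}$, which is regularly varying of index $(2H-2)m'\in(-1,0)$ by (\ref{eq:change rank}) and is strictly positive for $n$ large. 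By the non-central limit theorem of Dobrushin--Major and of Taqqu when $m'\ge2$, and by the classical fractional-Brownian limit for sums of a long-range dependent Gaussian sequence when $m'=1$ (the rank-one term then dominates and the remainder is negligible), there is a normalization $A_N$, regularly varying of index $H'=1-m'(1-H)$, with $A_N^{-1}\sum_{i=1}^{[Nt]}Y_i\ConvFDD\xi(t)$, where $\xi$ is $H'$-self-similar with stationary increments and covariance proportional to $s^{2H'}+t^{2H'}-|t-s|^{2H'}$ (fractional Brownian motion if $m'=1$, the Hermite process of order $m'$ otherwise).

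Now compare the two block sums $U_N=\sum_{i=1}^{N}Y_i$ and $V_N=\sum_{i=2N+1}^{3N}Y_i$. From the finite-dimensional convergence at $t=1,2,3$ and $V_N=\sum_{i=1}^{3N}Y_i-\sum_{i=1}^{2N}Y_i$, the continuous mapping theorem gives $(A_N^{-1}U_N,\,A_N^{-1}V_N)\ConvD(\xi(1),\,\xi(3)-\xi(2))$. On the other hand, $U_N$ is measurable with respect to $\mathcal{F}_1^N$ and $V_N$ with respect to $\mathcal{F}_{2N+1}^{3N}$ (for the filtration of $\{Q(X_i)\}$), and these $\sigma$-fields are separated by a gap of $N+1$; hence, directly from the definition of the mixing coefficient, $|\mathbb{P}(A_N^{-1}U_N\in A,\,A_N^{-1}V_N\in B)-\mathbb{P}(A_N^{-1}U_N\in A)\,\mathbb{P}(A_N^{-1}V_N\in B)|\le\alpha(N+1)\to0$ uniformly over Borel $A,B$. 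Combined with the convergence of the two marginals, this forces $(A_N^{-1}U_N,\,A_N^{-1}V_N)$ to converge in distribution to the product of the limiting marginal laws, so by uniqueness of the limit $\xi(1)$ and $\xi(3)-\xi(2)$ are independent and $\Cov(\xi(1),\,\xi(3)-\xi(2))=0$. But the covariance of $\xi$ gives $\Cov(\xi(1),\,\xi(3)-\xi(2))\propto 1+3^{2H'}-2\cdot2^{2H'}$, which vanishes at $H'=1/2$ and is increasing in $H'$ (an elementary one-variable estimate), hence is strictly positive for every $H'\in(1/2,1)$. This contradiction shows $\{X_i\}$ is not strong mixing.

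The argument is a combination of standard ingredients, so I do not anticipate a single deep obstacle; the step requiring the most care is the identification of the limit $\xi$ — invoking the non-central limit theorem, handling the rank-one case separately, and checking that $H'$ lies strictly in $(1/2,1)$, which is exactly (\ref{eq:change rank}) (and fails under (\ref{eq:SRD rank})). If one wishes to avoid quoting the non-central limit theorem, the same contradiction is reached by hand: $U_N$ and $V_N$ lie in a fixed-order sum of Wiener chaoses of $\{Z_i\}$, on which all $L^p$-norms are comparable by hypercontractivity, so Davydov's covariance inequality yields $\Cov(U_N,V_N)/\Var(U_N)\le C\,\alpha(N+1)^{1/2}\to0$, while a Karamata-type summation of $\Cov(Y_0,Y_n)\sim\beta_{m'}^2\,m'!\,n^{(2H-2)m'}L(n)^{m'}$ over the lags occurring in $U_N$ and in $V_N$ shows $\Cov(U_N,V_N)/\Var(U_N)$ tends to a strictly positive constant.
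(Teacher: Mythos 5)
Your argument is correct, but it reaches the contradiction by a genuinely different route than the paper. Both proofs share the opening move (strong mixing of $\{X_i\}$ passes to $\{Q(X_i)\}$ because the latter's $\sigma$-fields are coarser) and both exploit the variance asymptotics $\Var[\sum_{i=1}^N Q(X_i)]\sim c\,L(N)^{m'}N^{(2H-2)m'+2}$ together with hypercontractivity on a fixed-order Wiener chaos. From there the paper is shorter and more abstract: hypercontractivity gives uniform integrability of $s_N^{-2}S_N^2$, and Theorem 1.3 of Peligrad (1986) says that a strong mixing sequence with this uniform integrability must have $s_N^2=l(N)N$ for some slowly varying $l$, which contradicts the superlinear variance growth in one line. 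You instead make the incompatibility concrete by comparing two blocks separated by a gap of length $N$: mixing forces their normalized sums to decorrelate, while the long-range dependence of $\{Q(X_i)\}$ keeps their correlation bounded away from zero. Your first variant (non-central limit theorem, independence of $\xi(1)$ and $\xi(3)-\xi(2)$, positivity of $1+3^{2H'}-2\cdot 2^{2H'}$ for $H'>1/2$) is correct but invokes machinery heavier than needed, since only second-moment asymptotics enter the final contradiction; your second variant (Davydov's covariance inequality with the $L^4$--$L^2$ comparison from hypercontractivity, against a Karamata estimate of $\Cov(U_N,V_N)/\Var(U_N)$ tending to a positive constant) is arguably the most self-contained of the three arguments, as it avoids citing both the non-central limit theorem and Peligrad's somewhat specialized result, at the cost of an explicit computation. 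One small point of care in the first variant: the conclusion that independence of the limits forces zero covariance uses that the limiting variables lie in a finite-order chaos and hence have finite second moments, which you implicitly rely on and could state.
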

\begin{Rem}
The process $\{X_i=P(Z_i)\}$ in the theorem is SRD. The theorem states that this process is not strong mixing if there a polynomial $Q(x)$ such that the new process $\{Q(P(Z_i))\}$ is LRD. Note that (\ref{eq:change rank}) implies, in view of (\ref{eq:SRD rank}), that $m'<m$.
\end{Rem}

\begin{proof}
We argue by contradiction. Suppose that $\{X_i\}$ is strong mixing. Then by the definition of strong mixing, 
$\{Q(X_i)\}$  is also strong mixing. But (\ref{eq:change rank}) implies that  (\citet{Taqqu:1975})
\begin{equation}\label{eq:contradiction}
s_N^2:=\Var\left[\sum_{i=1}^N Q(X_i)\right]\sim c_H L(N)^{m'}N^{(2H-2)m'+2}, \quad (2H-2)m'+2>1.
\end{equation}
On the other hand, $S_N:=\sum_{i=1}^N \left[Q(X_i)-\E Q(X_i)\right]$ is an element living on  Wiener chaos of a finite order (see \citet{janson:1997:gaussian}, Chapter 2). By \citet{janson:1997:gaussian}, Theorem 5.10, for any $p>2$, there exists a constant $c_p>0$ depending only on $p$, such that
\[
\E \left|s_N^{-1} S_N\right|^p \le c_p \left(\E \left|s_N^{-1} S_N\right|^2\right)^{p/2} =c_p.
\]
Therefore $\{s_N^{-2} S_N^2, N\ge 2\}$ is uniformly integrable. By Theorem 1.3 of \citet{peligrad:1986:recent}, strong mixing and uniform integrability imply that 
\[
s_N^2= l(N) N
\]
for some slowly varying function $l(N)$. This contradicts  (\ref{eq:contradiction}). 
\end{proof}

In some cases, no polynomial $Q(x)$ satisfies the requirement of Theorem \ref{Thm:contradict}. For example, when $P(x)=x^2$, then the Hermite rank $m=2$, and one always has
\[
\E Q(Z^2)H_1(Z)=\E Q(Z^2) Z=0
\]
for arbitrary polynomials $Q(x)$ (in fact for arbitrary $L^2(\Omega)$ functions). This is because $Q(Z^2)$ is an even function of $Z$.
So the Hermite rank of $Q(P(x))$ is  at least $2$, and hence we don't have $m'<m$.

In the simple case where $P(x)$ is a Hermite polynomial, we have the following result:
\begin{Pro}\label{Pro:H_m^p}
Suppose $P(x)=H_m$(x), $m\in \mathbb{Z}_+$. The polynomial $Q(x)$  required in Theorem \ref{Thm:contradict} exists in either of the following cases:
\begin{enumerate}[(a)]
\item $m\ge 4$ is even and $H>3/4$. 
\item $m\ge 3$ is odd.
\end{enumerate}
\end{Pro}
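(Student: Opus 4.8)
The plan is to exhibit, in each case, an explicit polynomial $Q$ and then to identify the Hermite rank $m'$ of $Q(H_m(x))$ by expanding that polynomial in the Hermite basis. I would take $Q(x)=x^2$ for case (a) and $Q(x)=x^3$ for case (b). The only ingredients needed are the product formula
\[
H_j(x)H_k(x)=\sum_{\ell=0}^{\min(j,k)}\binom{j}{\ell}\binom{k}{\ell}\ell!\,H_{j+k-2\ell}(x),
\]
the Gaussian integration-by-parts identity $\E[g(Z)Z]=\E[g'(Z)]$, and $H_m'=mH_{m-1}$.

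Case (a): expanding gives $Q(H_m(x))=H_m(x)^2=\sum_{\ell=0}^m\binom{m}{\ell}^2\ell!\,H_{2m-2\ell}(x)$. Every index occurring is even (as it must be, $H_m^2$ being an even function of $x$), and the smallest positive one is $2$, arising from $\ell=m-1$ with coefficient $\binom{m}{m-1}^2(m-1)!=m^2(m-1)!\ne0$. Hence the Hermite rank of $Q(H_m(x))$ is exactly $m'=2$, which is $<m$ since $m\ge4$, and (\ref{eq:change rank}) becomes $4H-3>0$, i.e. $H>3/4$ — precisely hypothesis (a).

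Case (b): here $Q(H_m(x))=H_m(x)^3$. Since $m$ is odd, $H_m$ is odd, so $H_m^3$ is odd; its constant Hermite coefficient vanishes, and I would compute its $H_1$-coefficient $\E[H_m(Z)^3Z]$. Gaussian integration by parts turns this into $3\E[H_m(Z)^2H_m'(Z)]=3m\,\E[H_m(Z)^2H_{m-1}(Z)]$, and by the expansion of $H_m^2$ above the latter equals $(m-1)!$ times the coefficient of $H_{m-1}$ in $H_m^2$. Since $m$ is odd we have $m-1=2m-2\ell$ with $\ell=(m+1)/2\in\{1,\dots,m\}$ an integer (this is where oddness of $m$ is used), so that coefficient is $\binom{m}{(m+1)/2}^2\bigl(\tfrac{m+1}{2}\bigr)!\ne0$, and therefore $\E[H_m(Z)^3Z]\ne0$. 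Thus the Hermite rank of $Q(H_m(x))$ is $m'=1<m$, and (\ref{eq:change rank}) reads $2H-1>0$, which holds for every $H\in(1/2,1)$.

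I would close by checking compatibility with the standing assumption of Theorem \ref{Thm:contradict} that $\{H_m(Z_i)\}$ is SRD, i.e. $(2H-2)m+1<0$, equivalently $H<1-\tfrac{1}{2m}$: in case (a) the hypotheses together describe the non-empty range $3/4<H<1-\tfrac{1}{2m}$ (non-empty exactly because $m\ge4$), and in case (b) they describe $1/2<H<1-\tfrac{1}{2m}$ for every odd $m\ge3$. I do not foresee a genuine obstacle — everything reduces to the product formula; the only point requiring care is the parity bookkeeping, which is exactly what forces $m'\ge2$ when $m$ is even (so that $m=2$ is genuinely excluded and the restriction $H>3/4$ cannot be dropped), while it allows $m'=1$ when $m$ is odd.
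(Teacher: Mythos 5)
Your proposal is correct, and the choice of witnesses ($Q(x)=x^2$ for (a), $Q(x)=x^3$ for (b)) is exactly the paper's. Case (a) is essentially identical to the paper's argument: both read off the $H_2$-coefficient $m^2(m-1)!$ from the product formula for $H_m^2$. In case (b) you diverge in how you certify that the $H_1$-coefficient of $H_m^3$ is nonzero. The paper expands $H_m^3$ by iterating the product formula into a double sum over $(k_1,k_2)$, exhibits one pair with $3m-2k_1-2k_2=1$, and then invokes the positivity of \emph{all} coefficients in that double sum to rule out cancellation among the several pairs contributing to $H_1$. You instead compute the coefficient directly as $\E[H_m(Z)^3Z]$ via the Stein identity $\E[g(Z)Z]=\E[g'(Z)]$ and $H_m'=mH_{m-1}$, reducing it to a single term of the $H_m^2$ expansion and obtaining the explicit value $3m(m-1)!\binom{m}{(m+1)/2}^2\bigl(\tfrac{m+1}{2}\bigr)!>0$. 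Your route is arguably cleaner: it needs only the two-factor product formula, gives a closed form rather than a positivity argument, and makes transparent where the oddness of $m$ enters (integrality of $\ell=(m+1)/2$). The paper's route is more self-contained in the sense that it uses only the product formula without invoking integration by parts. Your closing check that the two constraints on $H$ are simultaneously satisfiable ($3/4<H<1-\tfrac{1}{2m}$ in case (a), nonempty precisely because $m\ge 4$) is a worthwhile addition that the paper only gestures at.
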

\begin{proof}
Using the product formula ((3.13) of \citet{janson:1997:gaussian}) for Hermite polynomial, one has
\begin{equation}\label{eq:H_m^2}
H_m(x)^2=\sum_{k=0}^m k! {m\choose k}^2 H_{2m-2k}(x),
\end{equation}
\begin{equation}\label{eq:H_m^3}
H_m(x)^3=\sum_{k_1=0}^m  \sum_{k_2=0}^{(2m-2k_1)\wedge m}  k_1!k_2! {m  \choose k_1}^2 {2m-2k_1 \choose k_2} {m\choose k_2} H_{3m-2k_1-2k_2}(x).
\end{equation}

For case (a), choose $3/4<H<1$, but not too big such that $\{P(X_i)=H_m(X_i)\}$ is SRD. This will happen by constraining $H$ to satisfy (\ref{eq:SRD rank}). Now choose $Q(x)=x^2$. Then by (\ref{eq:H_m^2}), 
\[
Q(P(x))=H_m(x)^2= m!+(m-1)!m^2 H_2(x)+\ldots,
\] 
so $\{Q(P(Z_i))\}$ has Hermite rank $m'=2$, which is less than $m\ge 4$. Since $m'=2$, and $H>3/4$, we conclude that $\{Q(P(Z_i))\}$ is LRD and satisfies (\ref{eq:change rank}).

For case (b), choose $Q(x)=x^3$. Then 
\[
Q(P(x))=H_m(x)^3=a_1H_1(x)+\ldots
\] 
for some $a_1>0$. The term $H_1(x)$ appears when $3m-2k_1-2k_2=1$, e.g., when $k_1=(m-1)/2$,  $k_2=m$. The coefficient $a_1>0$ because all the coefficients before the Hermite polynomials in (\ref{eq:H_m^3}) are positive.
It is then clear that the Hermite rank of $H_m(x)^3$ is  $m'=1$. Hence the polynomial $Q(x)$ satisfies (\ref{eq:change rank}).
\end{proof}
\begin{Rem}
In Proposition \ref{Pro:H_m^p} case (b), we do not need a restriction on $H$. We require $m\ge 3$ since $m=1$ is incompatible with (\ref{eq:SRD rank}). 
\end{Rem}

\begin{Rem}
What about the converse? Can a strong mixing process  not be subordinated to a Gaussian LRD process? The answer is clearly ``yes''. Suppose for example  $\{X_i\}$ i.i.d.\ Gaussian. Then  there is no $\{X_i'\}\EqFDD\{X_i\}$ so that $X_i'=G(Z_i')$, where $\{Z_i'\}$ is LRD Gaussian, because the covariance $\Cov[X_i',X_0']\neq 0$ for large $i$.
\end{Rem}

\medskip
\noindent\textbf{Acknowledgments.} This work was partially supported by the NSF grant  DMS-1309009 at Boston University.
\bibliographystyle{plainnat}
\bibliography{Bib}

\medskip
\noindent Shuyang Bai~~ \textit{bsy9142@bu.edu}~~~~
Murad S. Taqqu ~~\textit{murad@bu.edu}~~~~\\
Department of Mathematics and Statistics\\
111 Cummington Mall\\
Boston, MA, 02215, US

\end{document}